\def\fracs#1#2{{}^{#1}\hspace{-0.25em}/%
	\hspace{-0.15em}_{#2}^{\vphantom{1}}}
\newtheorem{utv}{Proposition}
\gdef\th@remark{%
	\thm@headfont{\small\scshape}%
}
\theoremstyle{remark}
\newtheorem{rem}{Remark}
\begin{document}
\title{ Trajectories of light beams in a Kerr metric: the influence of
the rotation of an observer on the shadow of a black hole}
\author{ I.A. Bizyaev}
\affiliation{Ural Mathematical Center, Udmurt State University, ul. Universitetskaya 1, 426034 Izhevsk, Russia}
\email{ bizyaevtheory@gmail.com}

\begin{abstract}
 This paper investigates the trajectories
of light beams in a Kerr metric, which describes
the gravitational field in the neighborhood of a
rotating black hole. After reduction by cyclic coordinates,
this problem reduces to analysis of a Hamiltonian system
with two degrees of freedom. A bifurcation diagram is
constructed and a classification is made of the types of
trajectories of the system according to the values of first
integrals. Relations describing the boundary of the shadow
of the black hole are obtained for a stationary observer who
rotates with an arbitrary angular velocity about the axis of
rotation of the black hole.
\end{abstract}	
\maketitle
	\section*{Introduction}
	 An analysis of the trajectory of light beams for a Kerr metric has been carried out by Bardeen
\cite{Bardeen1973}. He analyzed how the rotation of a black
hole influences the form of its shadow against a brightly glowing
background which is far away from the event horizon.
	   Interestingly, it turned out that this shadow is
asymmetric with respect to the axis of rotation of the black
hole and, as pointed out in \cite{Thorne}, resembles the forms of
letter D.
	
	   A detailed discussion of subsequent studies of
the trajectories of light beams in a Kerr metric is presented
in \cite{PerlickTsupkoCalculating, Dokuchaev, Perlick2}. In those studies,
the shadow of the black hole is usually considered in
a locally nonrotating reference system which
is sufficiently far away from the event horizon.
	  An exception is Ref. \cite{Grenzebach},
in which an equation was obtained for the boundary of the
shadow of the black hole for a Carter observer for whom the
radial coordinate is larger than the maximal radial coordinate
of the spherical photon orbit.
	
	   This paper continues the investigation of
the geodesics of a Kerr metric using methods of
qualitative analysis.
	   Previously, a bifurcation diagram was constructed in
\cite{BizyaevMamaev} for timelike geodesics. It was shown
that there exist seven diffent regions of values of first
integrals which differ in the topological type of integral
submanifold.  This paper is concerned with the isotropic
geodesics of a Kerr metric.
 In Section \ref{diagrammBifurcation},  a bifurcation
 diagram is constructed and a classification is made
 of the types of trajectories of light beams according to
 the values of first integrals.  Previously, such a
 diagram was constructed in \cite{Gralla}.
      In Section \ref{section3}, relations are obtained for
      the boundary of the shadow of a black hole for a
      stationary observer who rotates with an arbitrary
      angular velocity about the axis of rotation of the
      black hole. In addition, a backward tracing of light
      beams is constructed and it is shown how the image for
      the observer changes as the event horizon is approached.

	   The growing interest in research on the shadow of a 
black hole is fueled
by images obtained recently of supermassive objects M87* in the center of
galaxy M87 and SgA* in the center of our galaxy. Examination
of these images reveals
a bright accretion substance reaching the event horizon and
radiating in its vicinity
	   In this paper, we will not consider the image due to
an accretion disk. Such an image is discussed in detail, for example,
in \cite{ Luminet,Thorne,Dokuchaev}.

	\clearpage
	\section{Equations of motion of light beams}
In the Boyer\,--\,Lindquist coordinates
$\boldsymbol{x}=(t, r, \theta, \varphi)$ the interval
(line element) for a Kerr metric $g_{ij}$
is represented as follows \cite{Boyer}:
\begin{equation}
\label{eqKN03}
\begin{gathered}
ds^2 =g_{ij}dx^idx^j= -\frac{\rho^2 \Delta(r) }{A}dt^2 + \frac{A}{\rho^2}\sin^2\theta(d\varphi - \omega dt)^2 + \rho^2\left( \frac{dr^2}{\Delta(r)} + d\theta^2 \right), \\
\rho^2 = r^2 + a^2\cos^2\theta, \quad \Delta(r) = r^2 - 2r + a^2, \\
A = (r^2 + a^2)^2 - a^2\Delta(r) \sin^2\theta, \quad \omega = \frac{2r a}{A},
\end{gathered}
\end{equation}
where the signature $(-, +,+,+)$ is chosen and the
coordinates $r$ and $t$ are measured in the following units:
$$
	\frac{G M}{c^2}, \quad \frac{G M}{c^3},
$$
where $G$ is the gravitational constant, $c$ is the
velocity of light, and $M$ is the mass of the black hole.
	
The dimensionless parameter $a$ is expressed in terms of
the angular momentum
of the black hole $J$ relative to the symmetry axis
as follows:
$$
a = \frac{Jc}{G M^2}.
$$

At large distances, i.e., at $r \gg 1$ the metric
\eqref{eqKN03} becomes a flat Minkowski metric:
$$
ds^2 = dt^2 - \rho^2\left( \frac{dr^2}{r^2 + a^2} + d\theta^2\right) - (r^2 + a^2)\sin^2\theta d\varphi^2=dt^2 - dx^2 - dy^2 -dz^2,
$$
where the Cartesian coordinates are given by the following
relations:
\begin{equation}
\label{eq_x}
x=\sqrt{r^2 + a^2}\sin\theta\cos\varphi, \quad y=\sqrt{r^2 + a^2}\sin\theta\sin\varphi, \quad z=r\cos\theta.
\end{equation}

It follows from relations \eqref{eq_x} that
the level surfaces of the {\it radial
coordinates} $r={\rm const}$ at $t={\rm const}$
are confocal spheroids in the three-dimensional space
$$
\frac{x^2 + y^2}{r^2 + a^2} + \frac{z^2}{r^2}=1.
$$
The variables $\theta\in(0,\pi)$ and $\varphi\in[0,2\pi)$
are {\it polar} and {\it azimuth angles}, respectively,
and the coordinate $t$ plays the role of the {\it time} of an
external observer at rest.

The surface defining the {\it event horizon} for the
metric \eqref{eqKN03} is represented as
\begin{equation}
\begin{gathered}
\label{Eq_02}
\mathcal{S}_h=\{ (t, r, \theta, \varphi) \ | \ r=r_+  \}, \\
r_{+}=1 + \sqrt{1 - a^2},
\end{gathered}
\end{equation}
where $r_+$ is the dominant root of the equation $\Delta(r)=0$.
According to \eqref{Eq_02}, for an event horizon to exist, the
value $r_+$ must be real. This leads to the condition
$a\leqslant 1$. Furthermore, without loss of generality
we will assume that $a \geqslant  0$.

In what follows, we will consider the trajectories of light
beams only on the ``outer'' side of the event horizon
$\mathcal{S}_h$, i.e., on the manifold
$$
\mathcal{N}^4=\{ (t, r, \theta, \varphi) \ | \ t\in(-\infty, +\infty), \ r\in(r_+, +\infty), \ \theta\in(0,\pi), \ \varphi\in[0, 2\pi)  \}.
$$
We note that in this case the inequality $\rho(r, \theta)>0$
is satisfied.

The equations of motion for geodesics can be represented
in the following Hamiltonian form:
\begin{equation}
\label{eqH}
\begin{gathered}
\frac{d x^i}{d\tau} = \frac{\partial H}{\partial p_i}, \quad \frac{dp_i}{d\tau} = - \frac{\partial H}{\partial x^i}, \quad i=1,\dots, 4, \\
H=\frac{1}{2}g^{ij}p_ip_j,
\end{gathered}
\end{equation}
where $\boldsymbol{p}=(p_t, p_r, p_\theta, p_\varphi)$ is the
four-momentum along the trajectory and $g^{ij}$ is the
matrix inverse to the metric defined by relation
\eqref{eqKN03}.

The metric $g_{ij}$ does not depend explicitly on time $t$
and the angle $\varphi$, and hence they are cyclic coordinates
for equations \eqref{eqH}. As a consequence, the corresponding
momenta remain unchanged:
$$
E=-p_t={\rm const}, \quad L = p_\varphi = {\rm const},
$$
and the Hamiltonian system with two degrees of freedom
decouples from the system \eqref{eqH}:
\begin{equation}
\label{eqRed}
\begin{gathered}
\frac{dp_r}{d\tau} = - \frac{\partial H}{\partial r}, \quad \frac{dp_\theta}{d\tau} = - \frac{\partial H}{\partial \theta}, \quad
\frac{dr}{d\tau} = \frac{\partial H}{\partial p_r}, \quad \frac{d\theta}{d\tau} = \frac{\partial H}{\partial p_\theta}. \\
\end{gathered}
\end{equation}
The trajectories of light beams lie on the zero level set
of the Hamiltonian of this system:
\begin{equation}
\begin{gathered}
\label{eq_h_level}
H=\frac{1}{2\rho^2}\left( \Delta(r) p_r^2 + p_\theta^2 \right) + U(r, \theta)=0, \\
U(r, \theta)= \frac{1}{2\Delta(r) \rho^2}\left( 4arEL +\frac{\Delta(r)L^2}{\sin^2\theta} - a^2 L^2  \right) - \left(\frac{1}{2} +  \frac{r(r^2 + a^2)}{\Delta(r)\rho^2}\right)E^2,
\end{gathered}
\end{equation}
where the function $U(r, \theta)$
 is the effective potential.

 According to the well-known solution to the system
 \eqref{eqRed}, the evolution of the remaining variables is
 defined from the equations
\begin{equation}
\label{BLc2}
\begin{gathered}
\frac{d\varphi}{d\tau}=\frac{a}{\Delta(r)\rho^2}\left(E(r^2 + a^2) - aL\right) - \frac{aE}{\rho^2} + \frac{L}{\rho^2 \sin^2\theta}, \\
\frac{dt}{d\tau}=\frac{r^2 + a^2}{\Delta(r)\rho^2}\left(E(r^2 + a^2) - aL\right) + \frac{a}{\rho^2}\big(L - aE\sin^2\theta\big).
\end{gathered}
\end{equation}
In addition to the Hamiltonian $H$,  the reduced system
\eqref{eqRed} has an additional Carter integral \cite{Carter}
\begin{equation}
\label{Int}
F=p_\theta^2 + \left(aE\sin\theta - \frac{L}{\sin\theta} \right)^2 - 2a^2 H \cos^2\theta.
\end{equation}
For a more detailed discussion of the physical meaning
of the Carter integral, see \cite{Felice}.
Thus, we obtain the following well-known result:
the Hamiltonian system \eqref{eqRed} is integrable in the Liouville\,--\,Arnold sense. 	

\section{Bifurcation diagram and classification of trajectories }
\label{diagrammBifurcation}
 Both integrals of the system \eqref{eqRed} are quadratic in the momenta, and hence this system can be
 integrated by the method of separation of variables
 (for details, see \cite{Eisenhart1934}). In this case,
 $r$ and $\theta$ are separating variables. Therefore,
 in order to reduce the problem to quadratures, we fix
 the common level set of the Carter integral:
\begin{equation}
\label{In2t}
F=Q + (L - aE)^2,
\end{equation}
where $Q$ is some constant.

To analyze the trajectories of light beams, it is convenient
to introduce the quantities
\begin{equation}
\label{lambda_eta}
\lambda =\frac{L}{E}, \quad \eta=\frac{Q}{E},
\end{equation}
then we express the momenta $p_r$ and $p_\theta$ from \eqref{eq_h_level} and \eqref{Int} 
using \eqref{In2t} and substitute them into the last two
equations of motion of the system \eqref{eqRed}. As a result,
we obtain equations for $r$ and $\theta$ in the following
form:
\begin{equation}
\label{BLc}
\begin{gathered}
\left(\frac{dr}{d\tau}\right)^2 = \frac{E^2}{\rho^4}R(r),\quad
\left(\frac{d\theta}{d\tau}\right)^2 = \frac{E^2}{\rho^4} \Theta(\theta), \\
R(r)=(r^2 + a^2  - a \lambda)^2 - (\eta + (\lambda - a)^2)\Delta(r),\\
\quad
\Theta(\theta)=\eta  + \cos^2\theta\left(a^2 - \frac{\lambda^2}{\sin^2\theta} \right).
\end{gathered}
\end{equation}
As can be seen, in order to integrate these equations
in explicit form, we need to rescale time as
\begin{equation}
\label{BLc0}
d\tau = \frac{\rho^2(r, \theta)}{E} d\sigma.
\end{equation}
Using the new time variable and relations \eqref{lambda_eta},
we rewrite the system \eqref{BLc2} as
\begin{equation}
\label{eq_dphi}
\frac{d\varphi}{d\sigma}=\frac{a}{\Delta(r)}\left(r^2 + a^2 - a\lambda\right) - a + \frac{\lambda}{ \sin^2\theta},
\end{equation}
\begin{equation}
\label{eq_dt}
\frac{dt}{d\sigma}=\frac{r^2 + a^2}{\Delta(r)}\left(r^2 + a^2 - a\lambda\right) + a\big(\lambda - a \sin^2\theta\big).
\end{equation}
We first analyze the trajectories of equations \eqref{BLc}.
For them it holds that
\begin{itemize}
	\item[---] {\it the region of possible motion} on the plane $\mathbb{R}^2=\{ (r,\theta) \ | \ r>r_+, \ \theta\in(0,\pi) \}$
	is defined by the relations
	\begin{equation}
	\label{eq_OVD}
	R(r)\geqslant 0, \quad \Theta(\theta)\geqslant 0;
	\end{equation}
	\item[---] the simple zeros $r_u$ and $\theta_u$
of the functions
	$$
	R(r_u)=0, \quad \Theta(\theta_u)=0
	$$
	define the {\it turning points} of the
variables $r$ and $\theta$, respectively.
	\item[---]  those zeros $r_c$ and $\theta_c$ which
are simultaneously the critical points
	\begin{equation}
	\label{eqRdR}
	R(r_c)=0, \quad \left.\frac{dR}{dr}\right|_{r=r_c}=0,
	\end{equation}
	\begin{equation}
	\label{eqThdTh}
	\Theta(\theta_c)=0, \quad \left.\frac{d\Theta}{d\theta}\right|_{\theta=\theta_c}=0,
	\end{equation}
	define the invariant manifolds $r=r_c={\rm const}$ and  $\theta=\theta_c={\rm const}$.
\end{itemize}	
\begin{figure}[!ht]
\begin{center}
\includegraphics[scale=1.0]{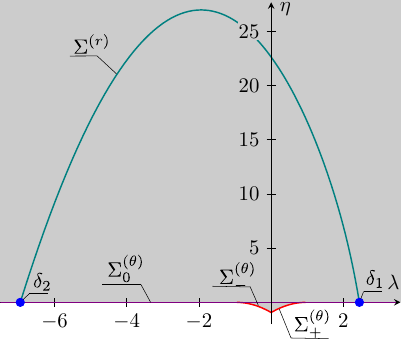}
\end{center}
\caption{Bifurcation diagram for the fixed $a=0.97$.
Gray denotes the values
of the integrals for which inequalities \eqref{eq_OVD} are satisfied. }
\label{fig2}
\end{figure}
Thus, investigation of the trajectories of light beams
reduces to analysis of the behavior of the functions $R(r)$
and $\Theta(\theta)$ depending on the values of the first
integrals $\lambda$ and $\eta$.

Solving relations \eqref{eqRdR} for $\lambda$, $\eta$ and
choosing those values
for which inequalities \eqref{eq_OVD} hold, we obtain a
parametrically given
curve on the plane of first integrals
$$
\begin{gathered}
\Sigma^{(r)} = \left\{ (\lambda, \eta) \ | \ \lambda =\frac{(1 + r_c)a^2 + r_c^2(r_c - 3)}{a(1-r_c)},  \ \eta=r_c^3\frac{ 4a^2 - r_c(r_c-3)^2 }{a^2(r_c - 1)^2}, r_c\in[r_1, r_2] \right\}, \\
r_1 = 2 + 2\cos\left(  \frac{2}{3}\arccos(-a)\right), \
r_2 = 2 + 2\cos\left(  \frac{2}{3}\arccos(a)\right)
\end{gathered}
$$
Solving relations \eqref{eqThdTh} for the values
of the first integrals, we obtain the straight line
$
\Sigma_0^{(\theta)}= \{ (\lambda, \eta) \ | \ \eta=0 \}
$
for which $\theta_c=\dfrac{\pi}{2}$,
and two curves
$$
\Sigma_{\pm}^{(\theta)}= \left\{ (\lambda, \eta) \ | \ \lambda = \pm a\sin^2\theta_c, \eta = -a^2\cos^4 \theta_c, \theta_c\in(0, \pi)  \right\},
$$
where the upper sign corresponds to $\Sigma_{+}^{(\theta)}$,
and the lower sign, to $\Sigma_{-}^{(\theta)}$.

The curve $\Sigma^{(r)}$ intersects with the straight line
$\Sigma_0^{(\theta)}$ at two points, which we denote by
$\delta_1$ and  $ \delta_2$.
They correspond to the fixed points of the reduced system
\eqref{eqRed} lying in the equatorial plane
$\theta=\dfrac{\pi}{2}$
and to the values of the radial coordinate, $r_1$ and
$r_2$, for $\delta_1$ and $\delta_2$, respectively.

\begin{rem}
Equations \eqref{eqRdR} have another solution, namely,
$$
\eta=-\frac{r_c^4}{a^4}, \quad \lambda = \frac{a^2 + r_c^2}{a},
$$
but it must be eliminated because in this case
$$
\Theta(\theta)=-\frac{(r_c^2 + a^2\cos^2\theta )^2}{a^2\sin^2 \theta}<0.
$$
\end{rem}

\begin{figure}[!ht]
\begin{center}
\includegraphics[scale=1.0]{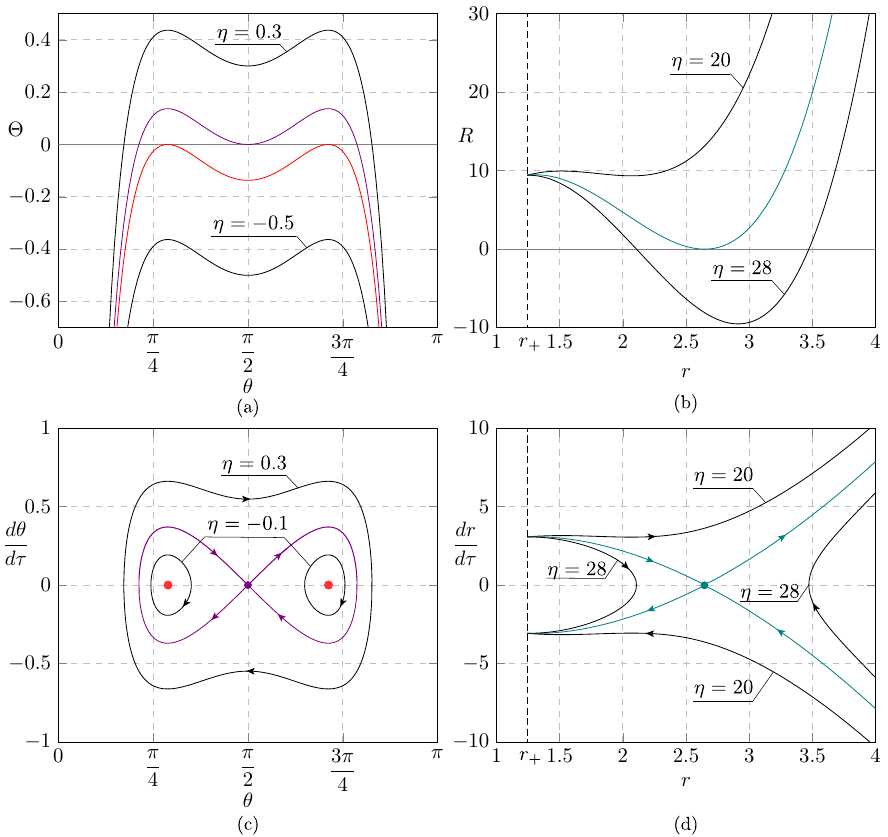}
\end{center}
\caption{For fixed $\lambda=-0.6$ and
$a=0.97$  (a) the function $\Theta(\theta)$, (b) the function
$R(r)$, and (с)-(d) the phase portraits, versus $\eta$.
The colored curves in these figures are plotted for the
values of the first integrals
lying on the bifurcation curves shown in the same colors in
Fig. \ref{fig2}.
}
\label{fig1}
\end{figure}

Typical curves found on the plane of first integrals are
shown in Fig. \ref{fig2}. As is well known from the analysis
of the function of a real variable, changes in the region of possible motion
\eqref{eq_OVD} and in the number of the turning points
$r_u$ or $\theta_u$ occur for the values of the first
integrals lying on these bifurcation curves
\cite{Bolsinov}.

\begin{rem}
The bifurcation curves thus found describe local
bifurcations. No nonlocal bifurcations due to changes in the
behavior of the function $R(r)$ arise on the boundaries of
the interval $(r_+, +\infty)$. Nonlocal bifurcations arise
for the motion of a material point (i.e., $H=\fracs{1}{2}$)
and are examined in detail in \cite{BizyaevMamaev}.
\end{rem}

\begin{rem}
The bifurcation diagram shows the point $(0, -a^2)$ at
which the curves $\Sigma_-^{(\theta)}$ and $\Sigma_+^{(\theta)}$ merge.
This point corresponds to the trajectory that lies entirely
on the symmetry axis.
\end{rem}

Figure \ref{fig1} shows examples of the functions
$\Theta(\theta)$ and $R(r)$ with different numbers of
turning points. Note that, if $\eta<0$, then the trajectories
do not cross the equatorial plane. Moreover, the inequality
$\dfrac{dt}{d\sigma}>0$ always holds in the region of possible
motion, and so we will not analyze the dependence
$t(\sigma)$ in detail.

Critical solutions for which $r(\sigma)=r_c$
are called spherical orbits. They are examined in detail
in \cite{Teo}. For the points $\delta_1$ and $\delta_2$
they reduce to flat trajectories which are circles.
Furthermore, for $\delta_2$ the direction of rotation
does not coincide with the direction of rotation of the
black hole since in this case $\dfrac{d\varphi}{d\sigma}<0$,
whereas for $\delta_1$ the direction of rotation does coincide
with the direction of rotation of the black hole since
$\dfrac{d\varphi}{d\sigma}>0$.

Let us consider in more detail the behavior of the function
$r(\sigma)$ depending on the values of the first integrals.
 According to the bifurcation diagram, two cases should be distinguished.
\begin{itemize}
\item[(i)]  If the point $(\lambda, \eta)$ lies below the curve
$\Sigma^{(r)}$, then the turning point $r_u$ is absent and
all trajectories, as they continue in $\tau$ in one direction, go to infinity 
$(r\to +\infty)$, and as they continue in $\tau$ in the opposite direction, 
they approach the horizon. In this paper, we call such trajectories
the ``horizon/infinity''.
\item[(ii)] If the point $(\lambda, \eta)$ lies above the
curve $\Sigma^{(r)}$, then there are two turning points $r_u$.
One turning point corresponds to a trajectory of the type
``horizon/horizon'', and the other, to a trajectory of the type
``infinity/infinity''.
\end{itemize}
In each of these regions the solution is given in terms of
elliptic functions (for details, see, e.g., \cite{Gralla, Gralla2}.
For the values of the integrals lying on the curve
$\Sigma^{(r)}$ in the phase portrait of the system
(see Fig. \ref{fig1}d) there are a saddle fixed point
and asymptotic trajectories adjacent to it ({\it separatrices}).

For the separatriсes the solution $r(\sigma)$ can be
expressed in elementary functions. In order to obtain it, we
substitute into the function $R(r)$ the values of the
integrals $\lambda$ and $\eta$ on the curve $\Sigma^{(r)}$
and represent this function as
\begin{equation}
\label{ss1}
\begin{gathered}
R(r)=\pm(r-r_c)\sqrt{r^2 + 2r_cr - 3r_c^2 +Z^2}, \\ Z=\frac{2\sqrt{r_c}}{1-r_c}\big(  3 r_c - (a^2 + 3 r_c^2) + r_c^3 \big)^{\fracs{1}{2}}.
\end{gathered}
\end{equation}
Integrating the first of equations \eqref{BLc}
using the new time variable \eqref{BLc0}, we find
an expression for the separatrix which, for the initial value
of the radial coordinate $r(0)>r_c$, approaches
asymptotically the solution $r=r_c$:
\begin{equation}
\label{eq_r_sigma}
\begin{aligned}
r(\sigma)=r_c + Z^2 \big(r(0) - r_c\big)\Big( &\big(Z^2 + 2r_c(r(0) - r_c)\big) \cosh (Z\sigma) + \Big.\\ \Big.
+&Z\sqrt{Z^2 + \big(r(0) + r_c^2\big) - 4r_c^2}\sinh(Z\sigma) - 2 r_c\big(r(0) - r_c\big) \Big)^{-1}.
\end{aligned}
\end{equation}
\begin{figure}[!ht]
\begin{center}
\includegraphics[scale=1.1]{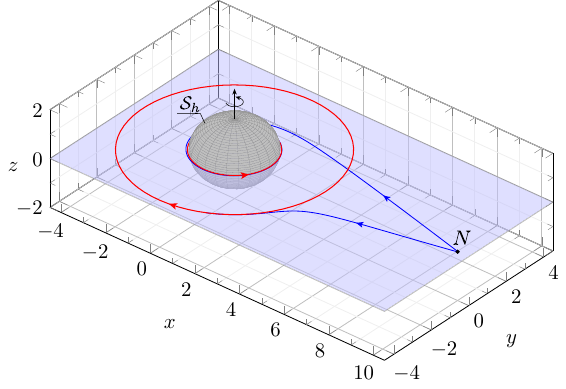}
\end{center}
\caption{Typical circular trajectories (red) and separatrices
(light blue) for the fixed parameter $a=0.97$ and the initial
conditions $r(0)=10$ and $\varphi(0)=0$, which correspond to
point $N$.}
\label{fig3}
\end{figure}

Figure \ref{fig3} shows separatrices lying in the equatorial
plane $\theta=\dfrac{\pi}{2}$. For them, the dependence
$\varphi(\sigma)$ was obtained after integration of
equation \eqref{eq_dphi} taking \eqref{eq_r_sigma} into
account. The flat trajectories with the initial conditions from
point $N$ which lie between the separatrices plunge into the event horizon, and the other
trajectories go to infinity. Note that an asymmetry arises
in these plunging trajectories with respect to the axis
of rotation of the black hole, which is caused
by the positions of the unstable circular orbits.
As the parameter $a$ increases, this asymmetry becomes more pronounced
because the distance between the unstable circular orbits
increases.

\section{Stationary reference systems and the shadow of the
black hole}
\label{section3}
Consider reference systems which outside the horizon
$\mathcal{S}_h$ rotate about the axis of rotation of the
black hole with constant angular velocity $\Omega$.
For them, the space-time remains stationary, and hence we will
call them {\it stationary reference systems} \cite{Semerak}.
Suppose that at the origin of such a reference system
there is an observer for whom the Boyer\,--\,Lindquist
coordinates are given by the following relations:
$$
r=r_0={\rm const}, \ \theta=\theta_0={\rm const}, \ \varphi=\Omega t + \varphi_0.
$$
For such a stationary observer the four-velocity has the following form:
$$
\begin{gathered}
\boldsymbol{u}=u_0\left(\frac{\partial}{\partial t} + \omega_0 \frac{\partial}{\partial \varphi}\right), \quad
u_0 =\frac{dt}{d\tau} = \rho_0\Big( \Delta(r_0) - a^2\sin^2\theta_0 + \Omega(4ar_0 - A_0 \Omega) \sin^2 \theta_0 \Big)^{-\fracs{1}{2}},
\end{gathered}
$$
where  $\tau$ is the proper time and the following
notation is used:
$$
\rho_0^2 = r_0^2 + a^2\cos^2\theta_0, \quad
A_0 = (r_0^2 + a^2)^2 - a^2\Delta(r_0) \sin^2\theta_0,\quad  \omega_0=\frac{2r_0 a}{A_0}.
$$
In order that the observer's trajectory remain
timelike, the radicand for $u_0$
must take positive values. From this we obtain the following
range of admissible angular velocities:
$$
\Omega\in(\Omega_{ -}, \Omega_{+}), \quad  \Omega_{\pm} = \frac{1}{r_0^2 + a^2 \pm a \sqrt{\Delta(r_0)} \sin \theta_0} \left(a \pm  \dfrac{\sqrt{\Delta(r_0)}}{\sin\theta_0}\right),
$$
where the lower sign corresponds to $\Omega_{-}$ and the upper sign, to
$\Omega_{+}$.

Examples of observers which are usually given for a Kerr
metric are summarized in Table \ref{tab1} (for details, see, e.g., \cite{Semerak}). Recall
that a static observer can be defined only outside the
ergosphere.

\begin{table}[ht]	
\caption{ Examples of observers in the case of a Kerr metric.}
\begin{center}
\begin{tabular}{ | p{150pt} |  c | p{185pt}| }
\hline
name  &   angular velocity $\Omega$   & time component of velocity $u_0$ \\ \hline
zero angular momentum observer or ZAMO & $\omega_0$ & $ \dfrac{1}{\sqrt{\Delta(r_0)}}\left(r_0^2 + a^2 + \dfrac{2r_0 a^2\sin^2 \theta_0}{\rho_0^2} \right)^{\fracs{1}{2}}$  \\ \hline
 static observers & 0  & $\quad \quad \quad    \dfrac{\rho_0}{\sqrt{r^2_0 - 2 r_0 + a^2\cos^2\theta_0}}$ \rule[-2ex]{0pt}{6ex} \\ \hline
 Carter observers & $\dfrac{a}{r_0^2 + a^2}$  & $\quad \quad \quad \quad \quad \dfrac{r_0^2 + a^2}{\rho_0 \sqrt{\Delta(r_0)}}$ \rule[-2ex]{0pt}{6ex} \\ \hline
\end{tabular}
\label{tab1}
\end{center}
\end{table}

The basis vectors of the orthonormal tetrad which are related 
to the stationary observer have the following form (see Fig. \ref{fig4}a):
$$
\begin{gathered}
\boldsymbol{e}_{(t)}=\boldsymbol{u}, \quad
\boldsymbol{e}_{(r)} = -\frac{\sqrt{\Delta(r_0)}}{\rho_0}\frac{\partial}{\partial r}, \quad
\boldsymbol{e}_{(\theta)} = -\frac{1}{\rho_0}\frac{\partial }{\partial \theta}, \\
\boldsymbol{e}_{(\varphi)}  = \frac{u_0}{\sin\theta_0\sqrt{\Delta(r_0)}} \left[ \frac{A_0\sin^2\theta_0}{\rho_0^2}(\Omega - \omega_0) \frac{\partial}{\partial t}  +
 \left(1 - 2 r_0 \frac{1 - a \Omega \sin^2 \theta_0}{\rho_0^2} \right) \frac{\partial}{\partial \varphi} \right].
\end{gathered}
$$

\begin{figure}[!ht]
\begin{center}
\includegraphics[scale=1.8]{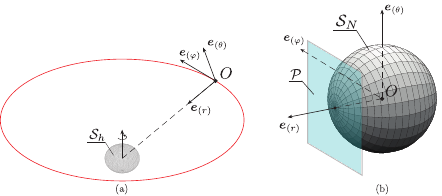}
\end{center}
\caption{ (a) Schematic diagram of the horizon
$\mathcal{S}_h$ and (b) schematic diagram of the
sphere $\mathcal{S}_N$ and the plane $\mathcal{P}$
relative to the spatial components of the vectors $\boldsymbol{e}_{(r)}$,
$\boldsymbol{e}_{(\theta)}$ and $\boldsymbol{e}_{(\varphi)}$.}
\label{fig4}
\end{figure}

We describe what a stationary observer must see in the
neighborhood of the black hole.  Assume that a point source
of light beams is at the origin and that these beams evolve
backward in time $\tau$. The tangent vector to such an
arbitrary light beam is given by the relation
\begin{equation}
\label{eq_w}
\begin{gathered}
\boldsymbol{w} = W(-\boldsymbol{e}_{(t)} + N_1 \boldsymbol{e}_{(\theta)} + N_2\boldsymbol{e}_{(\varphi)} + N_3 \boldsymbol{e}_{(r)} ), \\
\end{gathered}
\end{equation}
where the components of the radius vector
$\boldsymbol{N}=(N_1, N_2, N_3)$ lie on the sphere
$$
\mathcal{S}_N = \left\{ N_1= \sin \alpha \cos\beta, \ N_2 =  \sin\alpha \sin\beta, \  N_3=\cos\alpha \ | \  \alpha\in(0, \pi), \ \beta\in[0, 2\pi) \right\}.
$$

The trajectories with the initial conditions \eqref{eq_w}
which evolve onto the event horizon will form the shadow of
the black hole \cite{Bardeen1973, PerlickTsupkoCalculating}
against the background of light beams from remote sources.
In order to describe the boundary of this shadow, we fix
the coordinates $r_0$ and $\theta_0$ of the observer
and his angular velocity $\Omega$. Next, we define three
functions which depend on $r_c$ and
have the following form:
 $$
 \begin{gathered}
 B_\alpha(r_c) = \arccos\left[\frac{(r_0-r_c)\sqrt{r_0^2 + 2r_cr_0 - 3r_c^2 +Z^2}}{\rho_0u_0\big(1 - \lambda(r_c)\Omega\big)\sqrt{\Delta(r_0)}} \right], \
 \Theta_*(r_c)=\eta(r_c)  + \cos^2\theta_0\left(a^2 - \frac{\lambda^2(r_c)}{\sin^2\theta_0} \right), \\
 B_\beta(r_c) =  \arctan\left[\frac{u_0\big( \rho_0^2(\lambda(r_c) - \Omega(r_0^2 + a^2)\sin^2\theta_0 ) +2r_0(1-a\Omega\sin^2\theta_0)(a\sin^2\theta_0 - \lambda(r_c))  \big)}{\rho_0\sin\theta_0\sqrt{\Delta(r_0)} \sqrt{\Theta_*(r_c)}}\right],
 \end{gathered}
 $$
where $Z$ is given by \eqref{ss1}, and the functions
$\lambda(r_c)$ and $\eta(r_c)$ are defined by the
values of the integrals for the curve $\Sigma^{(r)}$.

\begin{figure}[!ht]
\begin{center}
\includegraphics[scale=1.2]{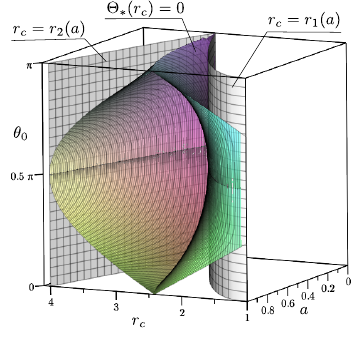}
\end{center}
\caption{The position of the surface  $\Theta_*(r_c)=0$
relative to the surfaces $r_c=r_1(a)$ and $r_c=r_2(a)$.}
\label{fig3d}
\end{figure}
The function $\Theta_*(r_c)$ always has two zeros,
$r^*_1$ and $r^*_2$, which satisfy the inequalities
(see Fig. \ref{fig3d})
$$
r_2 \geqslant r_2^*>r_1^*\geqslant r_1 .
$$
In this case, the following statement holds.
\begin{utv}
\label{utv1}
The curve describing the boundary of the shadow of the black
hole on the sphere $\mathcal{S}_N$ is given as follows:
\begin{itemize}
\item [1)] if $r_0>r_2^*$, then
\begin{equation}
\begin{gathered}
\label{eq_ab}
\mathcal{C}=  \big\{ \alpha = B_\alpha(r_c),   \beta=\pi +  B_\beta(r_c) \big\} \cup \big\{ \alpha = B_\alpha(r_c),   \beta= 2\pi - B_\beta(r_c) \big\},
 \end{gathered}
\end{equation}
where $r_c\in[r_1^*, r_2^*];$
\item [2)] if $ r_1^* > r_0$, then
\begin{equation}
\label{eq_ab2}
\begin{gathered}
\mathcal{C}=\big\{ \alpha = \pi - B_\alpha(r_c),   \beta=\pi +  B_\beta(r_c) \big\} \cup \big\{ \alpha = \pi - B_\alpha(r_c),   \beta= 2\pi - B_\beta(r_c) \big\},
\end{gathered}
\end{equation}
where $r_c\in[r^*_1, r^*_2];$
\item [3)] if $r^*_2 \geqslant r_0 \geqslant r^*_1$, then
the curve $\mathcal{C}$ is given by relations \eqref{eq_ab}
and \eqref{eq_ab2}, in which $r_c\in[r_0, r^*_2]$ and
$r_c\in[r_1^*, r_0)$, respectively.
\end{itemize}
\end{utv}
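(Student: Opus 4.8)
The plan is to read off the observation angles $(\alpha,\beta)$ directly from the orthonormal frame by projecting the ray's tangent vector $\boldsymbol{w}$ of \eqref{eq_w} onto the tetrad. Because the frame is orthonormal with signature $(-,+,+,+)$, the decomposition \eqref{eq_w} gives $W=\boldsymbol{w}\cdot\boldsymbol{e}_{(t)}$, $WN_1=\boldsymbol{w}\cdot\boldsymbol{e}_{(\theta)}$, $WN_2=\boldsymbol{w}\cdot\boldsymbol{e}_{(\varphi)}$, $WN_3=\boldsymbol{w}\cdot\boldsymbol{e}_{(r)}$, and each contraction equals $p_j\,e_{(\cdot)}^{\,j}$ since $\boldsymbol{w}$ is the raised four-momentum. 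Using $p_t=-E$, $p_\varphi=L=\lambda E$ together with the explicit tetrad, I would first obtain $W=-u_0E(1-\lambda\Omega)$ from $\boldsymbol{e}_{(t)}=\boldsymbol{u}=u_0(\partial_t+\Omega\partial_\varphi)$, and then the two straightforward spatial projections $WN_1=-p_\theta/\rho_0$ and $WN_3=-\sqrt{\Delta(r_0)}\,p_r/\rho_0$. The azimuthal projection $WN_2$ follows by contracting $(p_t,p_\varphi)$ against the bracketed coefficients of $\partial_t$ and $\partial_\varphi$ in $\boldsymbol{e}_{(\varphi)}$.

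The second step is to put the momenta on shell. From the separated equations \eqref{BLc} one has $p_\theta=\epsilon_\theta E\sqrt{\Theta(\theta_0)}$ and $p_r=\epsilon_r E\sqrt{R(r_0)}/\Delta(r_0)$ with independent signs $\epsilon_r,\epsilon_\theta=\pm1$. Restricting the integrals to the spherical-orbit curve $\Sigma^{(r)}$, i.e. setting $\lambda=\lambda(r_c)$, $\eta=\eta(r_c)$, I would use the factorisation \eqref{ss1} to write $\sqrt{R(r_0)}=|r_0-r_c|\sqrt{r_0^2+2r_cr_0-3r_c^2+Z^2}$ and identify $\Theta(\theta_0)=\Theta_*(r_c)$. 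Forming $\cos\alpha=N_3=WN_3/W$ then reproduces, up to the sign discussed below, exactly the argument of the $\arccos$ in $B_\alpha(r_c)$, while $\tan\beta=N_2/N_1=WN_2/WN_1$ reproduces $\tan B_\beta(r_c)$. The one nontrivial algebraic point is that the $\Omega$-dependent terms in $\boldsymbol{w}\cdot\boldsymbol{e}_{(\varphi)}$ collapse to the numerator of $B_\beta$ only after invoking the identity $A_0=\rho_0^2(r_0^2+a^2)+2r_0a^2\sin^2\theta_0$, which follows from $\rho_0^2=(r_0^2+a^2)-a^2\sin^2\theta_0$ and the definition of $\Delta$; I would verify this identity first and use it to cancel the bracket multiplying $\Omega$.

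The last and hardest step is the sign and case bookkeeping, where essentially all the content of the three cases resides. The factor $W$ must be positive because $\boldsymbol{w}$ is past-directed in the observer's frame, and the requirement that the backward-traced ray terminate on $\mathcal{S}_h$ fixes the physical branch $\epsilon_r$; together with $\mathrm{sign}(r_0-r_c)$ this decides whether $\cos\alpha=\cos B_\alpha$ or $\cos\alpha=-\cos B_\alpha$, i.e. whether $\alpha=B_\alpha$ (observer outside, $r_0>r_2^*$) or $\alpha=\pi-B_\alpha$ (observer inside, $r_0<r_1^*$). The two choices of $\epsilon_\theta$ produce the two symmetric arcs and, since the shadow lies at $N_2<0$, translate into $\beta=\pi+B_\beta$ and $\beta=2\pi-B_\beta$ rather than $\pm B_\beta$. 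Finally, reality of the direction forces $\Theta_*(r_c)\geqslant0$, which by Fig.~\ref{fig3d} restricts the parameter to $r_c\in[r_1^*,r_2^*]$; in the intermediate regime $r_1^*\leqslant r_0\leqslant r_2^*$ the ray can approach $r_c$ from either side, so the admissible interval splits at $r_c=r_0$ into $[r_0,r_2^*]$ and $[r_1^*,r_0)$, yielding exactly the mixed ranges of case~3. The main obstacle is precisely this sign analysis: confirming branch by branch that the past-directed, horizon-bound ray selects the stated combination of $B_\alpha$ versus $\pi-B_\alpha$ and the two $\beta$-arcs over the correct $r_c$-subinterval.
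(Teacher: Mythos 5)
Your proposal follows essentially the same route as the paper's proof: both express $(\alpha,\beta)$ through the conserved quantities via the observer's tetrad, identify the shadow boundary with the separatrices (in forward time, unstable ones) to spherical photon orbits so that $\lambda=\lambda(r_c)$, $\eta=\eta(r_c)$ lie on $\Sigma^{(r)}$ with $r_c\in[r_1^*,r_2^*]$ forced by $\Theta_*\geqslant 0$, and then fix the $\alpha$-branch and $r_c$-ranges by the sign of $dr/d\tau$ according to the position of $r_0$ relative to $[r_1^*,r_2^*]$. The only difference is organizational: you obtain $W$, $WN_1$, $WN_2$, $WN_3$ directly as orthonormal projections $p_j e^j_{(\cdot)}$, whereas the paper equates the two decompositions of $\boldsymbol{w}$ componentwise and eliminates the angles to get $W=u_0(\Omega L - E)$ and relations \eqref{eq_ang5} — the same computation arranged differently.
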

\begin{proof}
We first express the angles $\alpha$ and $\beta$ in terms of $\dfrac{d\theta}{d\tau}$, $\dfrac{dr}{d\tau}$, $L$ and $E$.
To do so, we represent the tangent vector \eqref{eq_w} to the light beam as
\begin{equation}
\label{eq_w2}
\boldsymbol{w}=\frac{dr}{d\tau}\frac{\partial}{\partial r} + \frac{d\theta}{d\tau}\frac{\partial}{\partial \theta} + \frac{d\varphi}{d\tau}\frac{\partial}{\partial \varphi} + \frac{dt}{d\tau}\frac{\partial}{\partial t}.
\end{equation}
 Setting relations \eqref{eq_w} and
 \eqref{eq_w2} equal to each other, we find
\begin{equation}
\label{eq_00}
\begin{gathered}
\frac{dr}{d\tau}=-\frac{W\sqrt{\Delta(r_0)}}{\rho_0}\cos\alpha, \quad \frac{dt}{d\tau}=-Wu_0\left( 1 - \frac{\Omega A_0 - 2ar_0}{\rho_0^2\sqrt{\Delta(r_0)}}\sin \alpha \sin \beta \sin\theta_0 \right), \\
\frac{d\theta}{d\tau}=-\frac{W}{\rho_0}\sin \alpha \cos\beta, \quad \frac{d\varphi}{d\tau}=-Wu_0\left(\Omega + \frac{a(a - 2 r_0\Omega)\sin^2\theta_0 - \Delta(r_0)}{\rho^2_0\sqrt{\Delta(r_0)}\sin\theta_0}\sin\alpha \sin\beta \right).
\end{gathered}
\end{equation}
We rewrite the relations for the integrals as
$$
\begin{gathered}
E=\frac{2ar_0\sin^2\theta_0}{\rho_0^2}\frac{d\varphi}{d\tau} +\left(  1 - \frac{2r_0}{\rho_0^2}  \right)\frac{dt}{d\tau}, \quad
L=\frac{A_0\sin^2\theta}{\rho_0^2}\frac{d\varphi}{d\tau}  - \frac{2ar_0\sin^2\theta_0}{\rho_0^2}\frac{dt}{d\tau}.
\end{gathered}
$$
Let us multiply the second equation by $\Omega$. Subtracting
the first equation from it, we obtain an equation that does
not depend
explicitly on the angles $\alpha$ and $\beta$. From this equation we find
$$
W=u_0(\Omega L - E).
$$
With \eqref{lambda_eta} taken into account, relations \eqref{eq_00}
 yield
\begin{equation}
\label{eq_ang5}
\cos \alpha = \frac{\rho_0}{u_0 E(1 - \Omega \lambda)\sqrt{\Delta(r_0)}}\frac{dr}{d\tau}, \quad
\sin\alpha \cos \beta = \frac{\rho_0}{u_0(1 - \Omega \lambda)} \frac{d\theta}{d\tau}.
\end{equation}

With the initial coordinates from the position of the observer, the noncritical trajectories 
of light beams traveling back in time  go to infinity or reach the event horizon. 
They are separated from each other by two unstable separatrices (in forward time)
to a spherical trajectory (for details, see the phase portrait in Fig. \ref{fig1}d). These
separatrices correspond to the sought-for values of the angles $\alpha$ and $\beta$.
As shown previously in Section \ref{diagrammBifurcation}, for the
separatrices the values of the integrals lie on the
bifurcation curve $\Sigma^{(r)}$,  and hence $\lambda$ and $\eta$ are given by the following
relations:
\begin{equation}
\label{0eq}
\lambda(r_c) =\frac{(1 + r_c)a^2 + r_c^2(r_c - 3)}{a(1-r_c)},  \ \eta(r_c)=r_c^3\frac{ 4a^2 - r_c(r_c-3)^2 }{a^2(r_c - 1)^2}.
\end{equation}

Recall that the region of possible motion is defined by
inequalities \eqref{eq_OVD}. In the case \eqref{0eq}
it follows from the second inequality that $r_c\in[r_1^*, r_2^*]$.
Moreover, if $r_0>r_2^*$ then
\begin{equation}
\label{eqProf1}
\frac{dr}{d\tau} = \frac{E}{\rho_0^2}\sqrt{R(r_0)},\quad
\frac{d\theta}{d\tau} = \pm\frac{E}{\rho_0^2} \sqrt{ \Theta(\theta_0)},
\end{equation}
in the first relation, $\pm$ is absent because in this case the inequality 
$\dfrac{dr}{d\tau}>0$ always holds for an unstable separatrix.
For $r_1^*>r_0$ we have
\begin{equation}
\label{eqProf2}
\frac{dr}{d\tau} = -\frac{E}{\rho_0^2}\sqrt{R(r_0)},\quad
\frac{d\theta}{d\tau} = \pm\frac{E}{\rho_0^2} \sqrt{ \Theta(\theta_0)},
\end{equation}
in this case the opposite inequality always holds for an unstable separatrix:
$\dfrac{dr}{d\tau}<0$ (see Fig. \ref{fig1}d).

If $r^*_2 \geqslant r_0 \geqslant r^*_1$, then for the
separatrices in \eqref{eqProf1} we need to choose $r_c \in [r_0, r_2^*]$, and  
in \eqref{eqProf2} we need to choose $r_c \in [r_1^*, r_0)$.
 Finally, we substitute \eqref{eqProf1} and \eqref{eqProf2}
 into \eqref{eq_ang5} and, after some transformations,
 we obtain the sought-for relations \eqref{eq_ab} and \eqref{eq_ab2}.
\end{proof}

\begin{rem}
We note that the functions $B_\alpha(r_c)$ and $B_\beta(r_c)$
simplify greatly for Carter observers and
hence the values of the angles $\alpha$ and $\beta$
for the boundary of the shadow of the black hole for $r_0>r_1$
can be reduced to the relations
$$
\sin \alpha = \frac{2\sqrt{\Delta(r_c)}\sqrt{\Delta(r_0)}}{\Delta(r_c) + \Delta(r_0) - r_c^{-1}(r_0 - r_c)^2}, \quad \sin\beta = \frac{a^2\cos^2\theta_0(r_c-1) - r_c(r_c^2 - 3r_c+2a^2)}{2ar_c\sqrt{\Delta(r_c)}}.
$$
Previously, these relations were obtained in \cite{PerlickTsupkoCalculating}.
\end{rem}

To visualize the curve  $\mathcal{C}$ found above, we perform a stereographic projection 
of the sphere $\mathcal{S}_N$ onto the plane $\mathcal{P}$. The position of the plane is 
shown in Fig. \ref{fig4}b. The Cartesian coordinates on this plane are given by the relations
$$
X=2 \tan \frac{\alpha}{2}\sin\beta, \quad Y = 2 \tan \frac{\alpha}{2} \cos\beta.
$$
Figure \ref{fig5}a  shows the curve $\mathcal{C}$ on the plane $\mathcal{P}$ plotted 
against 
the angle $\theta_0$. As can be seen, the shadow of the black hole is not 
axisymmetric, and this asymmetry is seen most clearly for the equatorial plane.
Figure \ref{fig5}b shows the shadow of the black hole plotted 
against the angular velocity
$\Omega$ in the equatorial plane.
Also, Fig. \ref{figStatic} shows how the boundary of the shadow
of the black hole changes depending on the radial coordinate of a static observer and 
a zero angular momentum observer.

\begin{figure}[!ht]
\begin{center}
\includegraphics[scale=1.0]{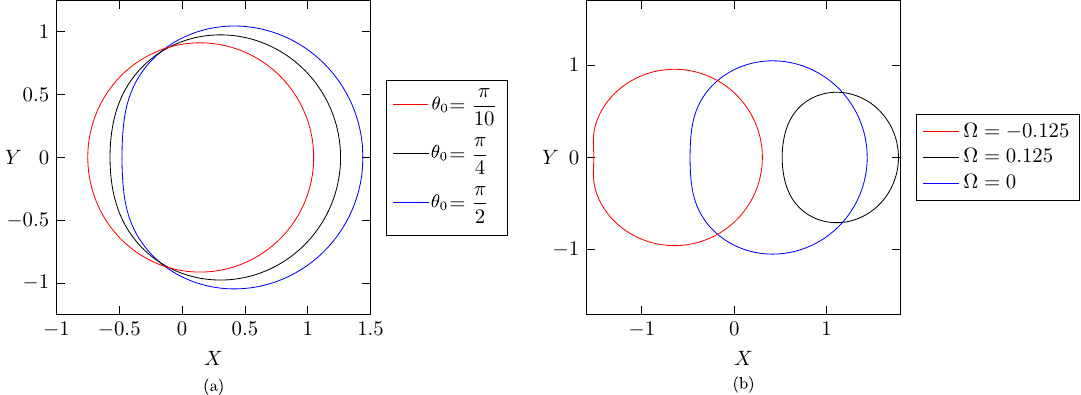}
\end{center}
\caption{ The curve $\mathcal{C}$ for the fixed $a=0.98$, $r_0=5$: (a) different 
$\theta_c$ and the fixed $\Omega=0$, (b) different $\Omega$ and the fixed 
$\theta_0=\displaystyle\frac{\pi}{2}$.}
\label{fig5}
\end{figure}

\begin{figure}[!ht]
\begin{center}
\includegraphics[scale=1.0]{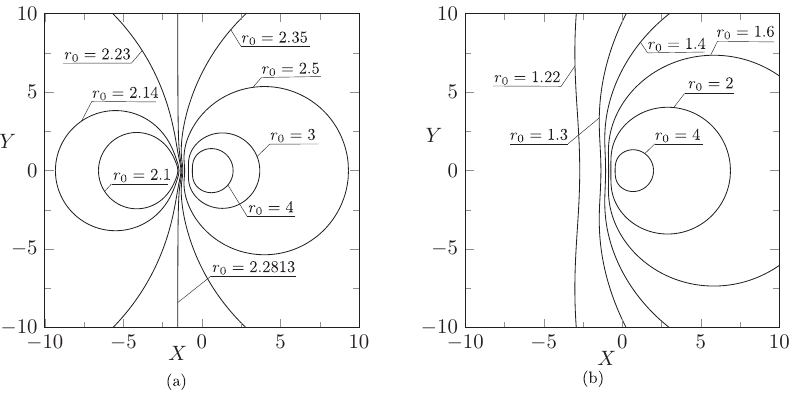}
\end{center}
\caption{The curve $\mathcal{C}$ for the fixed $a=0.98$, $\theta_0=\displaystyle\frac{\pi}{2}$ 
and different $r_0$: (a) for a static observer $\Omega=0$, (b) for a zero angular momentum 
observer  $\Omega=\omega_0$.}
\label{figStatic}
\end{figure}

\begin{figure}[!ht]
\begin{center}
\includegraphics[scale=1.1]{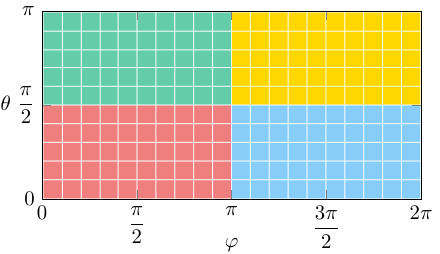}
\end{center}
\caption{Diagram of the celestial sphere on the plane of angles $(\varphi, \theta)$.}
\label{fig8}
\end{figure}

To visualize the image that a stationary observer must see, we perform
a backward tracing of light beams  \cite{Muller, Luminet, OSIRIS}. Below we describe briefly 
its construction.
\begin{itemize}
\item [---] Define the celestial sphere as a source of light
beams for which the value of the radial coordinate is $r \gg 1$.
Since $a \leqslant 1$, we can neglect in \eqref{eq_x} the
contribution of the parameter $a$ for the celestial
sphere. Single out four regions on the celestial sphere and show them in different colors 
as depicted in Fig. \ref{fig8}.
\item [---]  For the fixed initial conditions, from the position of the observer $r_0$, $\theta_0$ and $\varphi(0)=0$, but with different
values of the angles $\alpha$ and $\beta$, we will numerically integrate the system  
\eqref{eqRed} and the first equation in the system \eqref{BLc2} in backward time, i.e., 
after the time reversal $\tau \to -\tau$.
\item [---]  On the plane $\mathcal{P}$, the trajectories
of light beams which do not reach the celestial sphere will be shown in black,
and the other trajectories will be shown in colors depending on
what part of the celestial sphere the trajectory will reach.
\end{itemize}
The image on the plane $\mathcal{P}$ after the backward tracing of light beams is depicted in 
Figs. \ref{fig6} and \ref{ray}. In these figures one can clearly see a curvature of 
the trajectories of light beams due to the gravitational field. Moreover, 
as the radial coordinate decreases, the black region on the plane  $\mathcal{P}$ 
increases. This region corresponds to the shadow of the black hole.
The boundary of this region is in agreement with the relations obtained above in 
Statement \ref{utv1} (see Fig. \ref{figStatic}a).

\begin{figure}[!ht]
\begin{center}
\includegraphics[scale=0.9]{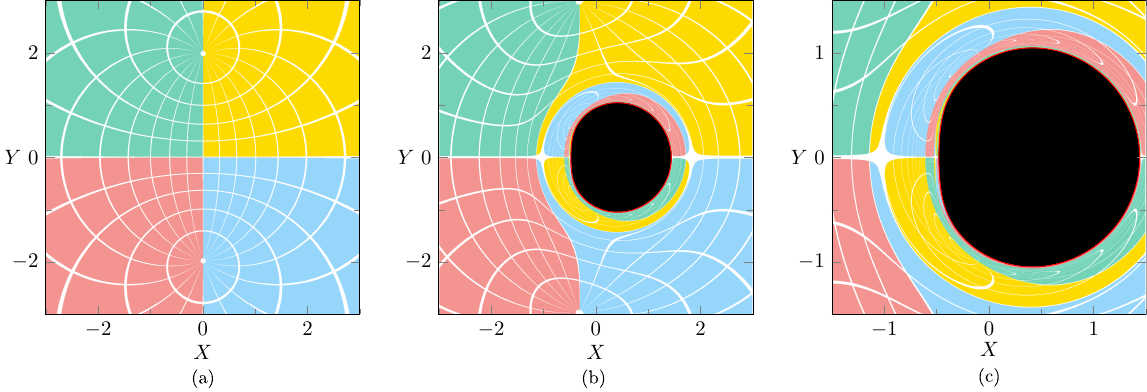}
\end{center}
\caption{ (a) The celestial sphere on the plane $\mathcal{P}$  in the plane space; 
(b) image on the plane $\mathcal{P}$ in the neighborhood of the black hole after the backward tracing of light beams and (c) enlarged fragment.  The observer is in the equatorial plane
$\theta_0=\dfrac{\pi}{2}$ with $r_0=5$  and angular velocity
$\Omega=0$. The celestial sphere is at distance $r=1000$.
Red denotes the shadow's boundary obtained from relations \eqref{eq_ab}.
 }
\label{fig6}
\end{figure}

\begin{figure}[!ht]
\begin{center}
\includegraphics[scale=1.0]{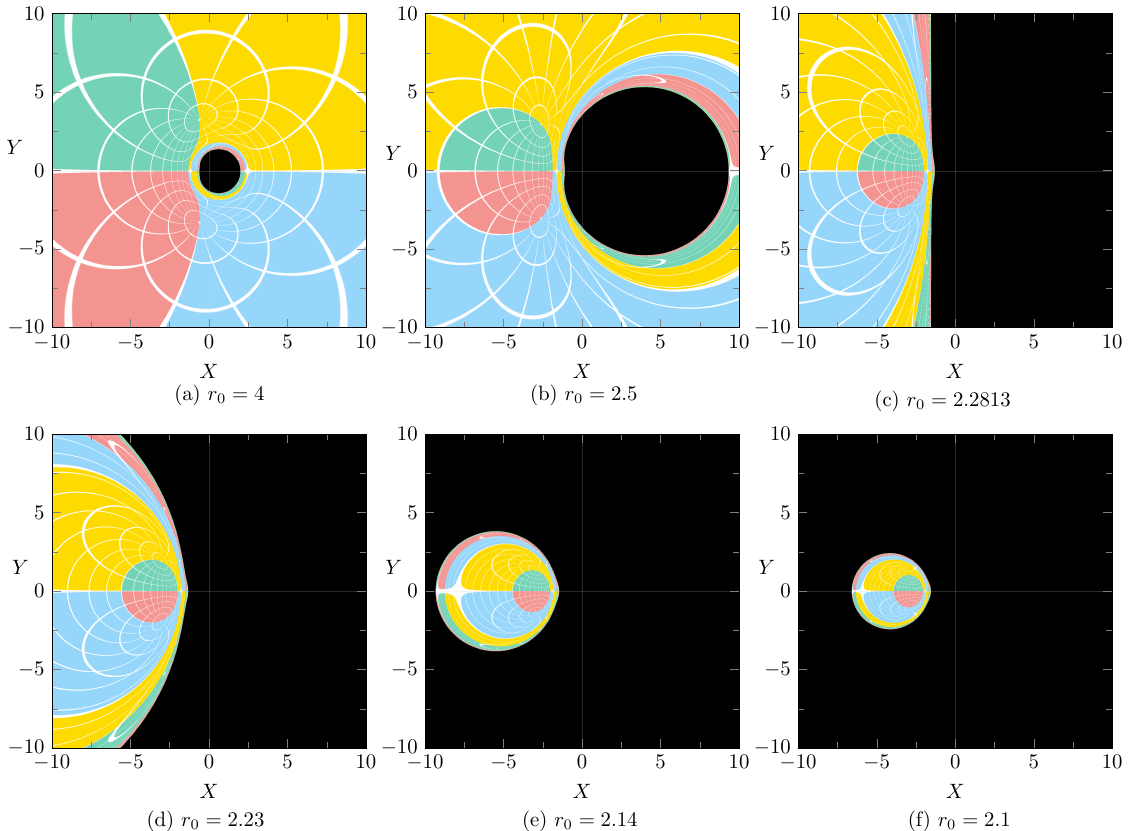}
\end{center}
\caption{The image on the plane $\mathcal{P}$ in the neighborhood of the black hole after the backward tracing of light beams, plotted against the radial coordinate $r_0$ of the stationary
observer $\Omega=0$
and fixed $\theta_0=\displaystyle\frac{\pi}{2}$, $a=0.98$.
The celestial sphere is at distance $r=1000$. }
\label{ray}
\end{figure}

A classification of trajectories of light beams was
carried out within the framework of the state assignment of the Ministry of Science and 
Higher Education of Russia (FEWS-2024-0007).  An analysis of the
boundary of the shadow of the black hole and the tracing of light beams was performed at 
the Ural Mathematical Center (Agreement No. 075-02-2024-1445).
	
\clearpage

\end{document}